\definecolor{webgreen}{rgb}{0,.5,0}
\definecolor{webbrown}{rgb}{.6,0,0}
\newcommand{\seqnum}[1]{\href{https://oeis.org/#1}{\underline{#1}}}
\def\modd#1 #2{#1\ \mbox{\rm (mod}\ #2\mbox{\rm )}}
\theoremstyle{definition}
\newtheorem{definition}{Definition}
\theoremstyle{plain}
\newtheorem{lemma}{Lemma}
\newtheorem{proposition}{Proposition}
\newtheorem{claim}{Claim}
\newtheorem{theorem}{Theorem}
\newtheorem*{theorem*}{Theorem}
\newtheorem{example}{Example}
\begin{document}

\begin{center}
\vskip 1cm
    {\LARGE\bf Counting the Nontrivial Equivalence Classes of $S_n$ Under $\{1234,3412\}$-Pattern-Replacement}
    \vskip 1cm
    \large
    Quinn Perian
    
    Stanford Online High School
    
    Academy Hall Floor 2 8853
    
    415 Broadway
    
    Redwood City, CA 94063
    
    USA
    
    \href{mailto:quinn.perian@outlook.com}{\tt quinn.perian@outlook.com}
    
    \ \\Bella Xu
    
    William P. Clements High School
    
    4200 Elkins Drive
    
    Sugar Land, TX 77479
    
    USA
    
    \href{mailto:bxu107@gmail.com}{\tt bxu107@gmail.com}
    
    \ \\Alexander Lu Zhang
    
    Lower Merion High School
    
    315 E Montgomery Ave
    
    Ardmore, PA 19003
    
    USA
    
    \href{mailto:azhang896@gmail.com}{\tt azhang896@gmail.com}
\end{center}

\begin{abstract}
We study the $\{1234, 3412\}$ pattern-replacement equivalence relation on the set $S_n$ of permutations of length $n$, which is conceptually similar to the Knuth relation.  In particular, we enumerate and characterize the nontrivial equivalence classes, or equivalence classes with size greater than 1, in $S_n$ for $n \geq 7$ under the $\{1234, 3412\}$-equivalence.  This proves a conjecture by Ma, who found three equivalence relations of interest in studying the number of nontrivial equivalence classes of $S_n$ under pattern-replacement equivalence relations with patterns of length $4$, enumerated the nontrivial classes under two of these relations, and left the aforementioned conjecture regarding enumeration under the third as an open problem.

\end{abstract}

\section{Introduction}

A permutation $\pi \in S_n$ is said to \emph{contain a pattern}
$\sigma \in S_c$, $c \le n$, if there is a $c$-letter subsequence
$\pi_{i_1}, \pi_{i_2}, \ldots, \pi_{i_c}$ of $\pi$ that is
\emph{order-isomorphic} with $\sigma$ (i.e., for indices
$j, k \in [c]$, $\pi_{i_j} < \pi_{i_k}$ if and only if
$\sigma_j < \sigma_k$).

In the past thirty years, the topic of permutation patterns has risen
to the forefront of combinatorics (see Kitaev \cite{kitaev2011patterns} for a survey) and has even spawned its own annual conference \cite{conf}.

The focus of this paper is \emph{permutation pattern-replacement
 equivalences}. Given a set of patterns $P \subseteq S_c$ and a
permutation $\pi \in S_n$, one can perform a
\emph{$P$-pattern-replacement} on $\pi$ by taking a subsequence
$\pi_{i_1}, \ldots, \pi_{i_c}$ of $\pi$ that forms a pattern
$\sigma \in P$ and rearranging the relative order of the characters
$\pi_{i_1}, \ldots, \pi_{i_c}$ so that they form a different pattern
in $P$. We say that two permutations $\alpha, \beta \in S_n$ are
\emph{$P$-replacement equivalent} if $\alpha$ can be reached from
$\beta$ by a series of $P$-pattern-replacements. This defines an
equivalence relation on $S_n$, which is known as the $P$-replacement
equivalence \cite{kuszmaul2013counting, kuszmaulzhou20, S12, LPRW,
 PRW, Kn, NS, ma, fazel2014equivalence}.

The study of permutation pattern-replacement equivalences is closely
related to the study of permutation pattern avoidance
\cite{kitaev2011patterns,SS,claesson2001generalized, lewis2011pattern,
 marcus2004excluded, albert2006stanley, bona2007new, bona2004limit},
which seeks to count the number of singleton equivalence classes
(i.e., the number of permutations containing no patterns in $P$). The
dual problem of counting the number of non-singleton (i.e.,
\emph{nontrivial}) equivalence classes has recently received a large
amount of attention in the literature \cite{kuszmaul2013counting,
 kuszmaulzhou20, S12, LPRW, PRW, Kn, NS, ma, fazel2014equivalence}.

Most of the research so far on permutation pattern-replacement
equivalences has worked to systematically understand the equivalence
classes for all pattern-replacement equivalence relations involving
patterns of length three \cite{kuszmaul2013counting, kuszmaulzhou20,
 S12, LPRW, PRW, Kn, NS}. Many interesting number sequences have
arisen (e.g., the Catalan numbers, Motzkin numbers, tribonacci
numbers, central binomial coefficients, and many more complicated
number sequences). 

Recently, Ma \cite{ma} initiated the systematic study of
pattern-replacement equivalences with patterns of length four. Because
the number of pattern-replacement sets $P \subseteq S_4$ is very
large, Ma took a computational approach to identifying which of the
pattern-replacement sets were most interesting to study. In
particular, Ma computed the number of nontrivial equivalence classes
under $P$-equivalence for all sets $P \in S_4 \times S_4$,
and then matched the resulting number sequences with the Online
Encyclopedia of Integer Sequences (OEIS) \cite{sloane2003online} in
order to identify pattern-replacement sets $P$ for which the number of
nontrivial equivalence classes has a natural formula. Ma identified
three formulas of particular interest and was able to
enumerate the nontrivial equivalence classes for the equivalence relations corresponding to two of
them. Enumerating the equivalence classes for the third equivalence
relation, namely the $\{1234, 3412\}$-equivalence, has
remained an open question.

In this paper, we resolve the aforementioned open problem by proving the following theorem:
\begin{theorem}[Conjectured by Ma \cite{ma}]
For $n \geq 7$, the number of nontrivial equivalence classes of $S_n$ under the $\{1234,3412\}$-equivalence is $\frac{n^3+6n^2-55n+54}{6}$ (given by sequence \seqnum{A330395}).
\label{thm:nontrivial}
\end{theorem}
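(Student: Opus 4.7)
The plan is to classify the nontrivial equivalence classes by producing a canonical representative in each class and then directly enumerating these representatives. A permutation sits in a nontrivial class if and only if it contains either a $1234$ or a $3412$ pattern, so the task is to understand when two such permutations are interconvertible.

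First, I would analyze the mechanics of the move. A $\{1234,3412\}$-replacement acts on four positions $i_1<i_2<i_3<i_4$ carrying values $(a,b,c,d)$ with $a<b<c<d$, exchanging them to $(c,d,a,b)$. Note that this swaps the ascending pair values between the two position pairs $\{i_1,i_2\}$ and $\{i_3,i_4\}$, while leaving everything else untouched; I would catalogue the coarse invariants this yields (the value multiset, the parity since inversion count shifts by $4$, and local data near positions not involved in any move). These invariants let us rule out many pairs of candidate representatives from lying in the same class.

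Second, I would construct a canonical form by reducing $3412$ patterns to $1234$ patterns greedily, proving termination (e.g., via a monovariant like a weighted inversion count) and, crucially, confluence so that every class has a unique $3412$-avoiding representative. If confluence fails, the canonical form must be augmented with auxiliary data (for instance, recording which $1234$ witnesses can be ``reversed''), and this extra data will itself be enumerable. The expectation is that each nontrivial class has a normal form consisting of a short $1234$ or $3412$ witness together with a $\{1234,3412\}$-avoiding ``frame.''

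Third, I would count the normal forms. The leading term $n^3/6$ of $\frac{n^3+6n^2-55n+54}{6}$ strongly suggests that canonical representatives are parameterized by a triple chosen from $[n]$, corresponding to the location and relative values of the minimal pattern occurrence, with lower-order corrections accounting for how this witness can be extended into a full permutation of $[n]$. The hypothesis $n\geq 7$ should emerge naturally from requiring that distinct witnesses yield distinct classes: for small $n$ different witnesses can coincide or be forced to interact, which is exactly why the formula can fail for $n<7$.

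The main obstacle is Step~2: establishing that the canonical form is both well-defined and unique inside each class. Confluence under a local rewriting system like this typically requires a delicate case analysis of how two overlapping $1234$ or $3412$ occurrences interact, including cases in which performing one move creates or destroys another. I expect a diamond-lemma-style argument, combined with the invariants from Step~1, will suffice, but carrying it out will occupy the bulk of the technical work.
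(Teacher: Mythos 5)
Your Step~2 is the crux, and it cannot work as stated. Because each $3412\to 1234$ replacement strictly decreases the inversion number, greedy reduction does terminate, so every nontrivial class contains \emph{some} $3412$-avoiding representative; but confluence would force the number of nontrivial classes to equal the number of $3412$-avoiding permutations lying in nontrivial classes (i.e., containing a $1234$), and that quantity grows exponentially in $n$, while the target formula is cubic. So uniqueness of the normal form fails massively: for instance $12\cdots n$ and $21435 6\cdots n$ are both $3412$-avoiding, yet they lie in the same class, since (as the actual classification shows) for $n\ge 7$ essentially all pattern-containing permutations that are not pinned down by a frozen leading $n$ or trailing $1$ collapse into just \emph{two} huge classes distinguished only by the parity of the inversion number, together with $n-1$ small exceptional classes. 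The ``auxiliary data'' you propose to add when confluence fails is therefore not a bookkeeping device --- it is the entire content of the theorem. For the same reason, the guess that the $n^3/6$ term counts a triple locating a minimal pattern witness is off target: the cubic does not enumerate local witnesses at all.

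What actually produces the cubic is a lifting recursion that your outline never touches. Since neither $1234$ nor $3412$ has its largest letter first or its smallest letter last, a leading $n$ or a trailing $1$ can never be moved by any replacement; hence prepending $n$ (or appending $1$) to every member of a class of $S_{n-1}$ yields a class of $S_n$, and inclusion-exclusion gives $A_n - B_n = 2A_{n-1} - A_{n-2}$, where $B_n$ counts the classes with no member beginning with $n$ or ending with $1$. The claimed formula is then equivalent to $B_n = n+1$, and these $n+1$ classes are identified explicitly: $n-1$ classes each consisting of all permutations one (suitably restricted) transposition away from a ``leader permutation'' $(k-1)(k-2)\cdots 1\,n(n-1)\cdots k$, plus the two parity classes described above, the latter handled by an induction on $n$ that deletes a carefully chosen letter and applies the inductive hypothesis twice. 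Your parity invariant from Step~1 is exactly the right separating invariant for the two big classes, but without the immovability/lifting recursion and the leader-permutation analysis there is no path from your outline to the polynomial $\frac{n^3+6n^2-55n+54}{6}$.
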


Recall that we define two permutations $\alpha, \beta \in S_n$ to be equivalent
under the $\{1234, 3412\}$-equivalence if $\alpha$ can be reached from
$\beta$ by performing a series of $1234 \rightarrow 3412$ and
$3412 \rightarrow 1234$ pattern-replacements. A
$1234 \rightarrow 3412$ pattern-replacement in a permutation $\pi$
simply takes an increasing 4-letter subsequence
$\pi_{i_1}, \pi_{i_2}, \pi_{i_3}, \pi_{i_3}$ (where
$i_1 < i_2 < i_3 < i_4$), and places each of
$\pi_{i_3}, \pi_{i_4}, \pi_{i_1}, \pi_{i_2}$ in positions
$i_1, i_2, i_3, i_4$ of the permutation. For example, in the
permutation $\pi = 7162435$, the subsequence $1, 2, 3, 5$ forms a
$1234$-pattern, and we can perform a $1234 \rightarrow 3412$
pattern-replacement to obtain the new permutation
$\pi' = 7365412$. Similarly, a $3412 \rightarrow 1234$
pattern-replacement takes any four-letter subsequence that forms a
$3412$ pattern, and rearranges the letters in that subsequence to
instead be in increasing order, thereby forming a $1234$ pattern.

Theorem~\ref{thm:nontrivial} counts the number of \emph{nontrivial}
equivalence classes in $S_n$ under the $\{1234,
3412\}$-equivalence. These are the equivalence classes of size greater
than one, or alternatively, the equivalence classes consisting of
non-$\{1234, 3412\}$-avoiding permutations. In the remainder of the
paper, we prove Theorem~\ref{thm:nontrivial}.

\subsection{Formal definitions}

\begin{definition}[Standardization of a Permutation]
Given a word $\rho$ of length $n$ consisting of $n$ distinct letters in $\mathbb{N}$, the \textit{standardization} of $\rho$ is the permutation in $S_n$ obtained by replacing the $i$-th smallest letter in $\rho$ with $i$ for each $i \in \{1, 2, \ldots, n\}$.
\end{definition}

\begin{definition}[Sub-standardization of a Permutation]
Given a permutation $\pi \in S_n$, and a subword $\rho$ of $\pi$, the standardization of $\rho$ is known as a \textit{sub-standardization} of $\pi$. If removing the letter $a$ from $\pi$ gives $\rho$, then the standardization of $\rho$ is also called the sub-standardization of $\pi$ formed by removing $a$.
\end{definition}

\begin{definition}[Pattern Formation]
Given a pattern $p \in S_c$ and a permutation $\pi \in S_n$, we say that $\pi$ contains pattern $p$ if $p$ is a sub-standardization of $\pi$. If a subword $\rho$ of $\pi$ has standardization $p$, then we say that $\rho$ \textit{forms} pattern $p$.
\end{definition}

\begin{definition}[Pattern-Replacement]
 Given two patterns $p, q \in S_c$ and a permutation $\pi \in S_n$, a
 $p \rightarrow q$ \textit{pattern-replacement} can be performed by taking any
 subword $\rho$ of $\pi$ that forms pattern $p$, and rearranging the
 letters in $\rho$ to instead form pattern $q$.
\end{definition}

\begin{definition}[Pattern-Replacement Equivalences]
 Given a set of patterns $P \subseteq S_c$, we say that two
 permutations $p, q \in S_n$ are \textit{equivalent} under $P$-equivalence if
 $p$ can be reached from $q$ by a sequence of pattern-replacements
 using patterns in $P$. Maximal collections of $P$-equivalent
 permutations are known as \textit{equivalence classes}, and an equivalence
 class is said to be \textit{nontrivial} if it contains more than one
 permutation.
\end{definition}

\subsection{Paper outline}
To prove Theorem
\ref{thm:nontrivial}, we will use three lemmas in order to
characterize the equivalence classes under the
$\{1234, 3412\}$-equivalence. To begin, we use the principle of inclusion-exclusion to establish a recurrence
relation that allows us to express the equivalence classes in
$S_n$ in terms of the equivalence classes in $S_{n - 1}$ and $S_{n-2}$ (Section
\ref{sec:recursion}). This reduces the proof of Theorem
\ref{thm:nontrivial} to proving that the number of equivalence
classes of a certain form is $n + 1$. We then characterize $n - 1$ of
these classes that have a natural combinatorial structure, consisting of
all permutations one transposition away from one of $n-1$ different
representative permutations (Section \ref{sec:small}). Finally, we
show that the remaining permutations fall into exactly two
classes depending on the parity of the number of inversions they
contain (Section \ref{sec:parity}); here, we use a proof structure
that exploits the \emph{stooge-sort technique} of Kuszmaul
\cite{kuszmaul2013counting}. Combining these steps, we prove Theorem
\ref{thm:nontrivial}.

\section{Reducing to permutations neither beginning with \texorpdfstring{$n$}{} nor ending with 1}\label{sec:recursion}

In this section, we reduce the proof of Theorem~\ref{thm:nontrivial} to counting the number of nontrivial equivalence classes that contain no permutations with $n$ in the first position or $1$ in the last position.
\begin{lemma}\label{lem:reduce with recursion}
Theorem~\ref{thm:nontrivial} can be reduced to proving that there are exactly $n+1$ nontrivial equivalence classes of $S_n$ where no permutation begins with $n$ or ends with $1$.
\end{lemma}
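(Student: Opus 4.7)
The plan is to derive an inclusion--exclusion recurrence expressing the number of nontrivial $\{1234,3412\}$-equivalence classes of $S_n$, call it $N(n)$, in terms of $N(n-1)$, $N(n-2)$, and the quantity $T'(n)$ denoting the number of such classes in which no permutation begins with $n$ or ends with $1$.

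The first step is an invariance observation: if $\pi \in S_n$ begins with $n$, then the leading $n$ cannot be part of any $1234$- or $3412$-pattern, since in both patterns the first coordinate holds a non-maximal value ($1$ in $1234$ and $3$ in $3412$). Hence no pattern-replacement can displace the leading $n$. Symmetrically, a trailing $1$ cannot move, because neither pattern places its minimum in the fourth coordinate. Thus each of ``begins with $n$'' and ``ends with $1$'' is an equivalence-class invariant.

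Next, I would establish size-preserving bijections: the nontrivial classes of $S_n$ whose permutations all begin with $n$ correspond to the nontrivial classes of $S_{n-1}$ via $n\pi' \leftrightarrow \pi'$, and symmetrically for those all ending with $1$. The classes satisfying both properties are in bijection with nontrivial classes of $S_{n-2}$ by stripping both ends and standardizing the middle. Applying inclusion--exclusion then yields
\[
N(n) = T'(n) + 2N(n-1) - N(n-2).
\]
A direct second-difference calculation shows $f(n) - 2f(n-1) + f(n-2) = n+1$ for $f(n) = (n^3 + 6n^2 - 55n + 54)/6$. Thus, once the base cases $N(5), N(6), N(7)$ (verified by direct enumeration) are known to match $f$, the hypothesis $T'(n) = n+1$ for $n \geq 7$ combined with strong induction on $n$ yields Theorem~\ref{thm:nontrivial}.

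There is no real obstacle at this stage: the crux is the elementary observation that the leading $n$ and the trailing $1$ are each pinned by every pattern-replacement, because $n$ is too large to play any role in the first coordinate of $1234$ or $3412$ and $1$ is too small to play any role in the last coordinate. Everything else is routine bookkeeping with inclusion--exclusion and a one-line algebraic identity; the genuine difficulty of the paper is deferred to establishing $T'(n) = n+1$ in the subsequent sections.
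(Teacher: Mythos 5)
Your proposal is correct and is essentially the paper's own argument: the same observation that a leading $n$ and a trailing $1$ are pinned by every $1234\leftrightarrow 3412$ replacement, the same inclusion--exclusion recurrence $A_n - B_n = 2A_{n-1} - A_{n-2}$, and the same second-difference identity $f(n)-2f(n-1)+f(n-2)=n+1$ followed by induction. The only (cosmetic) difference is the anchoring: since the formula is asserted only for $n\ge 7$, the paper verifies the base cases $n=7,8$ by computer and inducts from $n\ge 9$, which is the safer choice than your proposed anchors at $n=5,6,7$.
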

\begin{proof}
Let $A_n$ be the total number of nontrivial equivalence classes of $S_n$ under the $\{1234,3412\}$-equivalence. Let $B_n$ be the number of nontrivial equivalence classes of $S_n$ under the $\{1234,3412\}$-equivalence that do not contain any permutation beginning with $n$ or ending with $1$. Then we have the following recurrence relation.
\begin{claim}
For $n \ge 3$, 
\begin{equation}\label{eq:1}
A_n - B_n = 2A_{n-1}-A_{n-2}.
\end{equation}
\label{clm:recurrence}
\end{claim}
\begin{proof} 
Since the largest element is never first and the smallest element is never last in the patterns $1234$ and $3412$, no pattern-replacements under the $\{1234,3412\}$-equivalence can move a leading $n$ or an ending $1$. Therefore, appending $n$ to the front (resp., $1$ to the end) of all permutations of an equivalence class in $S_{n - 1}$ results in an equivalence class in $S_n$. Call this the \emph{lifting observation}.

We can count the number of equivalence classes where all permutations have either $n$ in the first position or $1$ in the last position (or both) in two different ways. The left side of \eqref{eq:1} represents this number with complementary counting, where all equivalence classes not satisfying the requisite are subtracted from the total number of equivalences classes. In the right side of \eqref{eq:1}, we count the same number of equivalence classes using the principle of inclusion-exclusion. By the lifting observation, we can append an $n$ to the front of all permutations in $S_{n-1}$ or append a $1$ to the back of all permutations in $S_{n-1}$ to form all possible equivalence classes where all permutations have either $n$ in the first position or $1$ in the last position (or both). (This contributes $2A_{n - 1}$ to \eqref{eq:1}.) The repeats, which are formed by appending both an $n$ to the front and a $1$ to the back of all permutations in $S_{n-2}$, are then subtracted off. (This removes $A_{n - 2}$.) 

Since the two sides of \eqref{eq:1} count the same quantity, we must have equality.
\end{proof}

Using the recursion given in the preceding claim, we now analyze the value that $B_n$ must take in order for $A_n$ to have the formula stated in Theorem~\ref{thm:nontrivial}.

\begin{claim}
Showing that $B_n = n+1$ suffices to prove that $A_n = \frac{n^3+6n^2-55n+54}{6}$ for all $n \ge 7$.
\end{claim}
\begin{proof}
Suppose $B_n = n + 1$ for all $n \ge 7$. We use this to prove that $A_n = \frac{n^3+6n^2-55n+54}{6}$ for all $n \ge 7$.

The base cases of $n=7,8$ for $A_n = \frac{n^3+6n^2-55n+54}{6}$ have been checked by computer \cite{ma}. 

Assume by induction that our formula for $A_n$ holds for $n = k-1$ and $n = k-2$ for some integer $k \ge 9$. Then by Claim~\ref{clm:recurrence}, 
\begin{align*}
A_k &= B_k + 2A_{k-1} - A_{k-2} \\
&= k+1 + 2\cdot\frac{(k-1)^3+6(k-1)^2-55(k-1)+54}{6} \\
& \phantom{foobargoobar} - \frac{(k-2)^3+6(k-2)^2-55(k-2)+54}{6} \\
&= k+1 + 2\cdot\frac{k^3+3k^2-64k+114}{6} - \frac{k^3-67k+180}{6} \\
&= \frac{k^3+6k^2-55k+54}{6},
\end{align*}
which is our formula for $A_k$. 

By induction, $A_n = \frac{n^3+6n^2-55n+54}{6}$ holds for all $n \ge 7$.
\end{proof}

This completes the proof of the lemma.
\end{proof}

\section{Characterizing the \texorpdfstring{$n-1$}{} small classes}\label{sec:small}

In this section we characterize $n-1$ of the nontrivial equivalence classes of $S_n$ that contain no permutation beginning with $n$ or ending with $1$. Each of these classes is associated with a ``leader permutation'' that is one transposition away from each permutation in the class. 

Below we define the notion of a leader permutation and what we mean when we call two permutations ``adjacent.'' The equivalence classes then correspond to the sets of permutations that are adjacent to each leader permutation.

\begin{definition}[Leader Permutation] Define a \textit{leader permutation} of length $n$ to be a permutation of the form $a_1a_2 \cdots a_n$ such that for some integer $k \in [2,n]$, $a_i = k-i$ for all $1\leq i < k$ and $a_{i} = n+k-i$ for all $k \leq i \leq n$.
\end{definition}
\begin{example}
The $7$ leader permutations of length $8$ are $18765432$, $21876543$, $32187654$, $43218765$, $54321876$, $65432187$, and $76543218$. 
\end{example}
\begin{definition}[Adjacent Permutations] 
We define two permutations $\pi,\rho\in S_n$ to be \textit{adjacent} if neither $\pi$ nor $\rho$ begins with $n$ or end with $1$ and if $\pi$ is a transposition of $\rho$ such that the positive difference between the two letters in the transposition is neither $1$ nor $n-1$. 
\end{definition}

We now provide our characterization of the equivalence classes associated with the leader permutations in $S_n$.

\begin{proposition}
For any leader permutation $\pi$, the set of permutations adjacent to $\pi$ is an equivalence class. Moreover, there are $n-1$ of these classes.
\label{prop:adjacentclasses}
\end{proposition}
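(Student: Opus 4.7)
The plan is to establish three facts: (a) any two permutations adjacent to a fixed leader $\pi_k$ are $\{1234,3412\}$-equivalent; (b) the equivalence class of any such adjacent permutation contains only permutations adjacent to $\pi_k$; and (c) distinct leaders yield disjoint adjacent sets, so we obtain exactly $n-1$ classes. To set up, I first note that $\pi_k = (k-1)(k-2)\cdots 1\,n(n-1)\cdots k$ consists of two decreasing blocks separated by a single ascent and is itself $\{1234,3412\}$-avoiding, since any increasing length-four subsequence would have at most one element in each block, and any $3412$ occurrence would need two pairs $v_1<v_2$ and $v_3<v_4$ with $v_4<v_1$, which is incompatible with the block structure. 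Each adjacent permutation $\rho_{a,b}$ is then $\pi_k$ with two values $a<b$ swapped, subject to $b-a \in \{2,\ldots,n-2\}$ and to the requirement that $\rho_{a,b}$ neither starts with $n$ nor ends with $1$. I would classify these by whether $a, b$ both lie in the left block of $\pi_k$, both lie in the right block, or one lies in each.

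For (a), I would designate a canonical adjacent permutation and show that every other $\rho_{a,b}$ can be connected to it by a chain of $\{1234,3412\}$-replacements. The key observation is that, because $\pi_k$ itself avoids both patterns, every pattern occurrence in any $\rho_{a,b}$ must involve at least one of the two displaced values $a$ or $b$, so each replacement effectively just modifies the transposition and carries $\rho_{a,b}$ to some $\rho_{a',b'}$ again adjacent to $\pi_k$. An explicit chain would then be constructed by casework on the type of swap, using a small stock of ``basic moves'' that move between $\rho_{a,b}$ and $\rho_{a',b'}$ whose underlying transpositions differ by a single coordinate.

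For (b), using the same pattern-constraint together with the rigid decreasing-block skeleton of the untouched part of $\rho_{a,b}$, I would verify that every $\{1234,3412\}$-replacement on $\rho_{a,b}$ produces another permutation of the form $\rho_{a',b'}$ for some valid $(a',b')$, and in particular cannot place $n$ at the front, $1$ at the end, or create a ``third displacement'' that would break the two-block structure. For (c), from any $\rho$ adjacent to $\pi_k$ the parameter $k$ can be recovered by locating the two anomalous positions and reading off the ascent between the surviving decreasing blocks; hence the adjacent sets for different values of $k$ are pairwise disjoint, yielding exactly $n-1$ classes.

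The main obstacle will be step (b): one must systematically enumerate all $1234$ and $3412$ occurrences in every $\rho_{a,b}$ and verify that each associated replacement lands inside the adjacent set. This amounts to a moderately detailed case analysis on the relative positions of $a$ and $b$ in the two blocks of $\pi_k$, but the rigidity of the block structure ensures that only a handful of patterns appear in each case, so each subcase should reduce to a local verification that dovetails with the connectivity chain produced in step (a).
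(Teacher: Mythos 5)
Your overall decomposition (closure of the adjacent set under replacements, internal connectivity, disjointness across distinct leaders, hence exactly $n-1$ classes) matches the paper's, and your part (c) is essentially the paper's Lemma~\ref{lem:adjacent1}. But there are two genuine gaps. First, in (b) your stated justification --- that every $1234$ or $3412$ occurrence in $\rho_{a,b}$ involves \emph{at least one} of the displaced values --- is too weak to yield the conclusion that a replacement ``just modifies the transposition.'' What is actually needed (and what the paper proves in Lemma~\ref{lem:adjacent2}) is that every occurrence contains \emph{both} displaced letters: for $1234$ because the leader's longest increasing subsequence has length $2$, and for $3412$ because the leader contains none of the length-$3$ subpatterns $341, 342, 312, 412$. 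One must then check that the replacement returns the two displaced letters to their leader order and introduces exactly one new transposition, and moreover that this new transposition again satisfies the adjacency constraints (value difference neither $1$ nor $n-1$, and no leading $n$ or trailing $1$). You acknowledge that a case analysis is required, but none of it is carried out, and without the ``both letters'' fact the inference does not go through.

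Second, and more seriously, step (a) --- that all permutations adjacent to a fixed leader are mutually equivalent --- is the heart of the proposition, and your proposal contains no argument for it beyond the assertion that a chain of unspecified ``basic moves'' would be constructed by casework. The available single-step moves are quite constrained (for instance, by the analysis in (b), a $1234$-replacement applied to a same-block swap necessarily produces a cross-block swap, and a $3412$-replacement is only available in certain cross-block configurations), so connectivity of the induced graph on transpositions is precisely what has to be proved, not assumed; it is not clear that moves ``differing in a single coordinate'' are even realizable as single replacements. The paper does not attempt a direct chain construction: in Lemma~\ref{lem:adjacent3} it proves connectivity by induction on $n$, deleting a common letter $a \neq 1, n$ chosen away from both transpositions and not positionally adjacent to them, so that the sub-standardizations are again adjacent to a leader of length $n-1$, with base cases $n = 7, 8$ verified by computer. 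To complete your route you would need to exhibit a concrete, verified set of moves together with a connectivity argument, or else fall back on an inductive scheme of this kind; as written, (a) is a plan rather than a proof.
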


To prove Proposition~\ref{prop:adjacentclasses}, we begin by proving that the equivalence classes are disjoint.
\begin{lemma}\label{lem: disjoint adj to leader}
For distinct leader permutations $\pi, \psi$, the set of permutations adjacent to $\pi$ is disjoint from the set of permutations adjacent to $\psi$.
\label{lem:adjacent1}
\end{lemma}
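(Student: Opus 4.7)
The plan is to show that any two distinct leader permutations $L_k$ and $L_{k'}$ disagree at \emph{every} one of their $n$ coordinates. This forces them to be too far apart for any single permutation to be adjacent to both, since adjacency is defined via a single value-transposition. In the regime $n \geq 7$, this will immediately imply that the adjacency sets attached to different leader permutations are disjoint.

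The key step is a direct position-by-position comparison. Fixing $2 \leq k < k' \leq n$ and appealing to the piecewise formula in the definition of a leader permutation, I split indices into the three ranges $1 \leq i < k$, $k \leq i < k'$, and $k' \leq i \leq n$. In the first and third ranges, $L_k[i]$ and $L_{k'}[i]$ are pulled from the same branch of the definition and differ by the nonzero constant $k' - k$. In the middle range, $L_k[i] = n + k - i$ while $L_{k'}[i] = k' - i$, and these values coincide only when $n = k' - k$, which is impossible since $k' \leq n$ and $k \geq 2$. So $L_k$ and $L_{k'}$ disagree in every coordinate.

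The second step is a simple symmetric-difference count. Suppose for contradiction some permutation $\rho$ is adjacent to both $L_k$ and $L_{k'}$. By the definition of adjacency, $\rho$ is a transposition of each, hence agrees with each in at least $n - 2$ positions, so $L_k$ and $L_{k'}$ can themselves disagree in at most $4$ positions. Combined with the previous step, this would require $n \leq 4$, contradicting our standing hypothesis $n \geq 7$. I do not anticipate any real obstacle; the only mildly delicate point is the middle range $k \leq i < k'$, where the two leader permutations are drawn from different branches of the piecewise formula, but even there the required inequality $n + k - i \neq k' - i$ follows immediately from the basic bounds on $k$ and $k'$.
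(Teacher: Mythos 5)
Your proof is correct and takes essentially the same approach as the paper: show that distinct leader permutations disagree in every position, then observe that a permutation adjacent to both would force them to differ in at most $4$ positions, which is impossible for $n \geq 7$. The only difference is that you verify the everywhere-disagreement claim by an explicit coordinate-by-coordinate comparison, whereas the paper simply asserts it by noting that any leader permutation is a derangement of any other.
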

\begin{proof}
Assume for the sake of contradiction that the two sets share a common element. This would imply that it is possible to reach $\pi$ from $\psi$ in two transpositions. Since any leader permutation is a derangement of any other leader permutation, the permutations $\pi$ and $\psi$ differ in all $n \geq 7$ positions. However, since each transposition can only switch the positions of two letters, it is impossible for two transpositions to get $\pi$ from $\psi$, a contradiction.
\end{proof}

Next we prove that 
\begin{lemma}
If we have a permutation adjacent to some leader permutation $\pi$ and we apply a pattern-replacement to it, the resulting permutation is also adjacent to $\pi$. 
\label{lem:adjacent2}
\end{lemma}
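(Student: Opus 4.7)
The plan is to exploit the rigid two-run structure of the leader permutation. Writing $\pi = (k-1)(k-2)\cdots 1\, n(n-1)\cdots k$ and letting $L = \{1,\ldots,k-1\}$, $H = \{k,\ldots,n\}$, every increasing pair of values in $\pi$ must consist of an $L$-value followed by an $H$-value, so any increasing subsequence of $\pi$ has length at most $2$, and any two increasing pairs share the unique $L$-$H$ boundary. Consequently $\pi$ contains neither a $1234$ nor a $3412$ pattern, so if $\rho$ is obtained from $\pi$ by swapping letters $a < b$ with $|a-b|\notin\{1,n-1\}$, every $1234$- or $3412$-occurrence in $\rho$ must use at least one of the positions $p_a, p_b$ where $\rho$ differs from $\pi$.

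I would then strengthen this observation to: every such occurrence in fact uses both $p_a$ and $p_b$. This follows from a short case check showing that no four-element $\pi$-sequence---which consists of a decreasing $L$-block followed by a decreasing $H$-block and therefore has at most one ascent---can be turned into $1234$ or $3412$ (each of which requires two ascents) by substituting a single value at one of the four positions.

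Next, I would classify the remaining configurations. Writing the four pattern values in increasing order as $u_1 < u_2 < u_3 < u_4$ and letting $j_a, j_b \in \{1,2,3,4\}$ denote the pattern slots occupied by $p_a, p_b$, the corresponding $\pi$-values at the four pattern positions are obtained from $u_1, u_2, u_3, u_4$ by exchanging the entries in slots $j_a$ and $j_b$. This sequence must match one of the possible $\pi$-shapes ``$m$ decreasing $L$-values followed by $4-m$ decreasing $H$-values'' with maximum $L$-value less than minimum $H$-value, and a short check reduces the possibilities to only two slot assignments for $1234$-occurrences and two for $3412$-occurrences. For each surviving configuration I would compute $\rho'$ directly: the replacement heals the two discrepancies at $p_a, p_b$ and introduces exactly two new discrepancies at the other two pattern positions, giving a new swap of letters $a', b'$. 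The ordering $u_1 < u_2 < u_3 < u_4$ together with the $L/H$-membership imposed by the configuration then forces $|a' - b'| \ge 2$ and rules out $|a' - b'| = n-1$.

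The boundary conditions on $\rho'$ hold automatically: in a $1234 \to 3412$ replacement the first pattern position receives the third-smallest of the four involved letters (never $n$) and the last pattern position receives the second-smallest (never $1$), with the analogous statement for $3412 \to 1234$, while non-pattern positions inherit their values from $\rho$ and so satisfy the adjacency conditions by assumption. The main obstacle will be the case-check in the second and third steps, where one must verify that only two slot assignments survive in each pattern type and then systematically track the new swap letters in each surviving case to rule out $|a' - b'| \in \{1, n-1\}$.
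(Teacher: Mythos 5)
Your proposal follows essentially the same route as the paper: both exploit the two-decreasing-run structure of the leader permutation to show that any $1234$- or $3412$-occurrence must involve both swapped positions, then classify the possible slot assignments and observe that the replacement restores the two original letters and installs a new transposition satisfying the adjacency conditions. One caveat: your parenthetical ascent-counting rationale for the ``uses both positions'' step is not by itself sufficient (substituting a single value into a $\pi$-shape such as $2\,1\,9\,8 \mapsto 2\,5\,9\,8$ does create two ascents), so the deferred case check must, as in the paper, rely on the actual value structure --- e.g., that none of the length-$3$ subpatterns $341$, $342$, $312$, $412$ occurs in $\pi$ --- rather than on ascent counts alone.
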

\begin{proof}
Note that $\pi$ consists of a decreasing set of consecutive letters followed by another decreasing set of consecutive letters that are all larger than those of the first set; let $A$ and $B$ denote these two sets, respectively. 

Consider a permutation $a_1a_2 \cdots a_n$ such that if a transposition is applied to two letters $a_i$ and $a_j$, $i \neq j$, the resulting permutation is equal to a leader permutation $\pi$. We claim that any pattern-replacement applied to $a_1a_2 \cdots a_n$ must use both $a_i$ and $a_j$ and result in a permutation adjacent to $\pi$. We show this using casework based on which type of pattern in $a_1a_2 \cdots a_n$ is replaced. 

\textbf{Case 1:} A 1234 pattern is replaced. Since $A$ and $B$ are decreasing, the longest increasing subsequence in $\pi$ has length 2, so any 1234 pattern in $a_1a_2 \cdots a_n$ must use both letters $a_i$ and $a_j$ involved in the transposition. Moreover, the two swapped letters $a_i$ and $a_j$ would need to be in the same set (i.e., both in $A$ or both in $B$), because swapping a letter in $A$ with a letter in $B$ would not place the two letters in increasing order. In order to have an increasing sequence of length 4, the pattern must additionally contain one letter between $a_i$ and $a_j$, and one letter from the set $A$ or $B$ that does not contain $a_i$ and $a_j$. In particular, if the two letters besides $a_i$ and $a_j$ are both from $A$ or are both from $B$ then they are in decreasing order (a contradiction). If either of the two letters is from the same set (i.e., out of $A$ and $B$) as $a_i, a_j$ are in but is not between $a_i$ and $a_j$ then it does not form an increasing sequence with $a_i$ and $a_j$ (a contradiction). Since a $\{1234,3412\}$-pattern replacement swaps the first and third letters and swaps the second and fourth letters, $a_i$ and $a_j$ are swapped back to their original order in $\pi$, and a new transposition is applied. The new permutation is adjacent to $\pi$.

\textbf{Case 2:} A 3412 pattern is replaced.  
Consider the four three-term subpatterns of the 3412 pattern, 341, 342, 312, and 412; we show that $\pi$ cannot contain any of these. Assume for the sake of contradiction that some three letters $b_x$, $b_y$, $b_z$, $x<y<z$, of $\pi$ follow one of these patterns. Then, in any of the patterns, $b_x>b_z$. In addition, note that $b_y$ is either the largest or the smallest letter. If $b_y$ is the largest letter, then $b_x<b_y$, so $b_x$ must be in $A$ and $b_y$ must be in $B$, implying that $b_z$ must be in $B$ as well. Similarly, if $b_y$ is the smallest letter, then $b_y<b_z$, so $b_y$ must be in $A$ and $b_z$ must be in $B$, and thus $b_x$ must be in $A$. In either case, $b_x$ is in $A$ and $b_z$ is in $B$, which is a contradiction because $b_x>b_z$. Thus $\pi$ cannot contain any length-3 subpattern of $3412$.

As a consequence, any 3412 pattern in $a_1a_2 \cdots a_n$ would have to contain both $a_i$ and $a_j$. Furthermore, $a_i$ and $a_j$ would need to be in different sets, and the 3412 pattern would need to contain exactly one letter between $a_i$ and $a_j$. To see this, note that in $\pi$, any four term subsequence corresponds to one of the patterns 4321, 3214, 2143, or 1432 depending on how many letters are from each of $A$ and $B$. The only transpositions of two terms that could result in 3412 are swapping the 2 and 4 in 3214 and swapping the 1 and 3 in 1432. In either case, the smaller term is from $A$ and the larger term is from $B$, and there is exactly one term between these two terms.

Since a $\{1234,3412\}$-pattern-replacement swaps the first and third letters and swaps the second and fourth letters, $a_i$ and $a_j$ are swapped back to their original order in $\pi$, and a new transposition is applied. The new permutation is adjacent to $\pi$.

This completes the proof of the claim.
\end{proof}

Finally, we show that any two permutations adjacent to the same leader permutation are always equivalent.
\begin{lemma}
For any permutations $\tau$ and $\sigma$ that are both adjacent to a leader permutation $\pi$, $\tau$ can be reached from $\sigma$ by a sequence of $\{1234,3412\}$-pattern-replacements. 
\label{lem:adjacent3}
\end{lemma}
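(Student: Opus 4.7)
Each permutation adjacent to $\pi$ is uniquely of the form $\pi \cdot T$ for a valid transposition $T$ of $\pi$, and Lemma~\ref{lem:adjacent2} says each pattern-replacement carries $\pi \cdot T$ to some other $\pi \cdot T'$. So the lemma will follow from showing that the \emph{transposition graph}---the graph whose vertices are valid transpositions of $\pi$ and whose edges come from $\{1234, 3412\}$-pattern-replacements---is connected.

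I first catalogue the available moves using the casework from the proof of Lemma~\ref{lem:adjacent2}. Each pattern-replacement picks four positions $p_1 < p_2 < p_3 < p_4$ and applies the double transposition $(p_1\,p_3)(p_2\,p_4)$, interchanging the transpositions $(p_1, p_3)$ and $(p_2, p_4)$ of $\pi$. Only two position configurations can arise: a \emph{Type~I} move has $p_1, p_2, p_3 \in A$ and $p_4 \in B$, linking a within-$A$ transposition with a cross transposition; a \emph{Type~II} move has $p_1 \in A$ and $p_2, p_3, p_4 \in B$, linking a cross transposition with a within-$B$ transposition. So cross transpositions serve as bridges between the two within-set families.

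The core tool for connecting cross transpositions is a \emph{round trip}, a pair of moves returning to a cross. From a valid cross $(r, s)$ with $r \in [2, k-2]$ (interior of $A$), a Type~I round trip (cross $\to$ within-$A$ $\to$ cross) reaches any valid cross $(r, s')$: pick an intermediate within-$A$ transposition $(p_1, p_3)$ with $p_1 < r < p_3$, and then exit through a Type~I move choosing $p_2 = r$ and $p_4 = s'$. Symmetrically, a Type~II round trip from $(r, s)$ with $s \in [k+1, n-1]$ (interior of $B$) can change $r$ to any valid $A$-value while keeping $s$ fixed. The four ``corner'' crosses $(1, k), (k-1, k), (1, n), (k-1, n)$ are all forbidden by adjacency (two by the value-difference restriction, two by the no-leading-$n$/no-trailing-$1$ restriction), so every valid cross has at least one interior coordinate and thus at least one round trip available. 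Alternating round trips of both types then shows all valid cross transpositions lie in a single connected component.

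Finally, every within-$A$ transposition $(p, q)$ is directly linked to a cross by a single Type~I move: choose any $r \in (p, q)$ (which exists since $q - p \ge 2$) and any $s \in B$ giving a valid cross $(r, s)$, which exists because $r \in [2, k-2]$; and symmetrically every within-$B$ transposition is linked to a cross via Type~II. Hence all valid transpositions live in one connected component. The main obstacle I anticipate lies in the small-side regimes $k \in \{2, 3, n-1, n\}$, where one of $A, B$ admits no within-set transpositions and only one round-trip type is available; the hypothesis $n \ge 7$ will ensure enough room on the opposite side for the round-trip argument to go through.
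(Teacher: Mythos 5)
Your route is genuinely different from the paper's. The paper proves this lemma by induction on $n$ (with $n=7,8$ checked by computer): it deletes a carefully chosen common letter $a$ from $\tau$ and $\sigma$, applies the inductive hypothesis to the sub-standardizations, and lifts the replacement sequence back. You instead argue directly, identifying adjacent permutations with valid transpositions of $\pi$ and proving connectivity of the resulting move graph. Your move catalogue is correct and matches the casework inside Lemma~\ref{lem:adjacent2}: with $A$ the first block (positions $1,\dots,k-1$) and $B$ the second, the only replacements are your Type~I (positions $p_1<p_2<p_3\in A$, $p_4\in B$, exchanging the within-$A$ transposition $(p_1,p_3)$ with the cross $(p_2,p_4)$) and Type~II (the mirror image), and the four excluded ``corner'' crosses are exactly $(1,k)$, $(k-1,n)$ (leading-$n$/trailing-$1$) and $(1,n)$, $(k-1,k)$ (letter differences $1$ and $n-1$). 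Your round trips are also legitimate: an interior $A$-coordinate lets you change the $B$-coordinate arbitrarily, and vice versa, and every valid cross has an interior coordinate. If completed, this argument has a real advantage over the paper's: it is uniform in $n$ and needs no computer-verified base cases, and it makes the class structure (a ``grid'' of crosses bridging the two within-block families) completely explicit.

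The one genuine gap is the case you flag yourself and then only assert: the extreme leaders $k\in\{2,3,n-1,n\}$, where one block admits no within-block transpositions, so only one move type and one round-trip direction exist; also, even in the generic regime $4\le k\le n-2$, ``alternating round trips'' deserves two explicit lines (e.g., a cross with extreme $A$-coordinate must first have its $A$-coordinate moved into the interior via a Type~II round trip, which is possible because its $B$-coordinate is then forced to be interior). Both issues are fillable. For the extreme regimes, note that every valid cross then has its coordinate on the large block interior (the corners exhaust all crosses with extreme large-block coordinate when the small block has only one or two positions), and the spanning within-$B$ transposition $(k,n)$ (resp.\ within-$A$ $(1,k-1)$) is valid and serves as a hub: every valid cross connects to it by a single Type~II (resp.\ Type~I) move, and every within-block transposition connects to some valid cross, so the graph is connected for $n\ge 7$. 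Until those cases are written out, the proof is incomplete as stated, but the missing steps are routine and do not threaten the approach.
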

\begin{proof}
We prove this using induction on $n$. Our base cases of $n=7$ and 8 can be checked by computer. Assume as an inductive hypothesis that the result holds for $n - 1 \ge 8$. Consider two permutations $\tau$ and $\sigma$ of length $n$ that are adjacent to some leader permutation $\pi$. Pick one of their common letters $a$ that is (1) not $1$ or $n$; (2) not part of either transposition from the leader permutation; (3) not adjacent in position to both letters in the transposition from $\pi$ to $\tau$; and (4) not adjacent in position to both letters in the transposition from $\pi$ to $\sigma$. Such an $a$ is guaranteed to exist because $n \geq 9$. Now, consider the sub-standardizations $\tau \prime$, $\sigma \prime$, and $\pi \prime$ respectively formed by removing $a$ from $\tau$, $\sigma$, and $\pi$. Note that $\pi \prime$ is a leader permutation of length $n-1$ (because $a \neq 1$ and $a \neq n$). In addition, $\tau \prime$ and $\sigma \prime$ are both adjacent to $\pi \prime$. Thus, by our inductive hypothesis, $\pi \prime$ can be reached from $\tau \prime$ by a sequence of pattern-replacements. By looking at the letters in $\tau$ corresponding to those of $\tau\prime$, we see that $\pi$ can be reached from $\tau$ by a sequence of pattern-replacements. This completes the proof by induction. 
\end{proof}

Combined, Lemmas \ref{lem:adjacent1}, \ref{lem:adjacent2}, and \ref{lem:adjacent3} complete the proof of Proposition~\ref{prop:adjacentclasses}.

\section{Showing that there are two remaining classes}\label{sec:parity} 

We now proceed to show that there are only two nontrivial classes remaining to be analyzed. To do this, we will find two representative permutations, one from each class. First, we will show that these representative permutations cannot be in the same equivalence class because they differ in parity and, hence, that there are at least two remaining equivalence classes. Then, we will show that every permutation that we have yet to analyze is equivalent to one of these two representative permutations, hence showing that there are at most, and thus exactly, two remaining nontrivial equivalence classes. To show the second part, we will use induction on the size of the permutation. We will begin by removing one letter of the permutation and applying the inductive hypothesis to the remaining letters to transform them into a representative permutation (of length $n - 1$). We then select a different letter in the new permutation (specifically, we pick a letter that is already in the ``final'' position that we want it to be in), we remove that letter from the permutation, and we apply the inductive hypothesis on the remaining letters in order to transform them into a representative permutation (of length $n - 1$). If done correctly, this construction causes the entire final permutation to form a representative permutation of length $n$, completing the proof.

We start out with some preliminary definitions.

\begin{definition}[Primary Permutation]
We call a permutation in $S_n$ \textit{primary} if it is not adjacent to a leader permutation and does not start with $n$ or end with 1.
\end{definition}
\begin{definition}[Primary Class] We define a \textit{primary class} to be a nontrivial equivalence class of primary permutations in $S_n$.
\end{definition} 

\begin{lemma}\label{lem: parity invariance}
All of the permutations in each equivalence class have the same parity.
\end{lemma}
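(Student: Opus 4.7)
The plan is to show that every single $1234 \leftrightarrow 3412$ pattern-replacement preserves the sign (parity of the inversion count) of a permutation; iterating then gives parity invariance across an entire equivalence class.

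First, I would decompose a single replacement into two position-transpositions. If the replacement acts on positions $i_1 < i_2 < i_3 < i_4$, it sends the values $\pi_{i_1}, \pi_{i_2}, \pi_{i_3}, \pi_{i_4}$ to $\pi_{i_3}, \pi_{i_4}, \pi_{i_1}, \pi_{i_2}$. Regardless of whether the source pattern is $1234$ or $3412$, this reassignment is exactly the composition of the two position-swaps $(i_1\; i_3)$ and $(i_2\; i_4)$, performed on $\pi$ by exchanging the values at the indicated positions.

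Next I would invoke the standard parity fact for a single position-swap: exchanging the values of a permutation at two positions $i < j$ changes the number of inversions by an odd integer, namely $\pm 1 + 2k$ where $k$ counts the positions strictly between $i$ and $j$ whose values lie strictly between $\pi_i$ and $\pi_j$. The sign of the $\pm 1$ depends on whether $\pi_i < \pi_j$ or $\pi_i > \pi_j$, and indices outside $[i,j]$ contribute nothing. Consequently each such transposition flips parity, and a product of two transpositions of positions preserves parity.

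Combining these two observations, every single $\{1234, 3412\}$-replacement preserves the parity of the permutation, so every permutation reachable from a given one by a sequence of replacements has the same parity, proving the lemma. I do not anticipate any real obstacle: the argument is entirely elementary once the replacement is recognized as a product of two position-transpositions, and the inversion-count fact is standard.
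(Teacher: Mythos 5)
Your proposal is correct and follows essentially the same route as the paper: the paper likewise observes that each $1234 \leftrightarrow 3412$ replacement is the composition of the two position-swaps $(i_1\; i_3)$ and $(i_2\; i_4)$, each of which flips the sign, so the replacement preserves parity. Your write-up simply spells out the inversion-count details that the paper leaves implicit.
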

\begin{proof}
Each $1234 \leftrightarrow 3412$ pattern-replacement consists of two transpositions. Since parity is invariant under both transpositions, the lemma follows.

\end{proof}

In the remainder of the section, we will attempt to start by applying induction. To do this, we will look at a sub-standardization of an arbitrary primary permutation formed by excluding one letter. The goal is to find conditions on the letter removed such that the sub-standardization is also primary, hence allowing us to apply the inductive hypothesis to the sub-standardization. There are two possible problems that we would run into: either the sub-standardization is adjacent to a leader permutation, or the sub-standardization could start with its largest letter or end in $1$. The latter is easily dealt with and we will put off to Case $2$ of Lemma~\ref{lem: 2 primary classes}. The former, however, can be dealt with by an intuitive claim that is proven in Lemma~\ref{lem:creating-primary-permutations}. We show in Lemma~\ref{lem:creating-primary-permutations} that, if we remove some letter and end up with a permutation adjacent to a leader permutation, we can instead choose a different letter with some constraints to yield a permutation that is not adjacent to a leader permutation.

\begin{lemma}[Creating Primary Permutations]\label{lem:creating-primary-permutations}
Let $\rho\in S_k$ with $k\geq 5$ be a primary permutation. Suppose that there is a letter $a$ in $\rho$ such that removing $a$ results in a sub-standardization $\rho_{-(a)}$ that is leader-permutation-adjacent. Let $\tau_1$ be the transposition taking $\rho_{-(a)}$ to a leader permutation, and suppose $b$ is some letter in both $\rho$ and $\rho_{-(a)}$ that is not operated on by $\tau_1$ and such that $a \not\equiv b \pm \modd{1} {k}$. 
Also, suppose that if $\tau_1$ operates on two letters with only one letter of $\rho_{-(a)}$ between them, then $b$ is not that letter. Then the sub-standardization $\rho_{-(b)}$ obtained by removing $b$ from $\rho$ is not adjacent to any leader permutation.

\end{lemma}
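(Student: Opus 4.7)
The plan is to argue by contradiction: suppose $\rho_{-(b)}$ is also adjacent to some leader permutation $L_2$, via a transposition $\tau_2$. Combined with the hypothesis on $\rho_{-(a)}$, $L_1$, and $\tau_1$, this produces two near-leader descriptions of the same permutation $\rho$, and I aim to show that such coexistence is incompatible with the hypotheses placed on $b$.

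First I would extract, from the $\rho_{-(a)}$ side, what $\rho$ must look like. Writing the subword of $\rho$ obtained by deleting $a$ in its unstandardized form (values in $\{1,\ldots,k\}\setminus\{a\}$), $\rho$ is this subword with $a$ reinserted at some position $p_a$. Since $\tau_1$ carries $\rho_{-(a)}$ to the leader $L_1$, the subword differs from an unstandardized leader-shaped arrangement on $\{1,\ldots,k\}\setminus\{a\}$ by exactly one transposition. Because $\rho$ itself is primary, the combined effect of inserting $a$ at $p_a$ and applying the $\tau_1$-swap must destroy leader-adjacency; this pins down how $a$ and $\tau_1$ interact with the underlying two-block leader structure. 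The analogous analysis on the $\rho_{-(b)}$ side yields a second near-leader description of $\rho$ in terms of $L_2$, $\tau_2$, and a position $p_b$.

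The heart of the argument is to compare the two reconstructions. Because leader permutations are extremely rigid (two decreasing blocks of consecutive values), I would show that two genuinely distinct near-leader descriptions of the same $\rho$ can coexist only if one of the following holds: (a) $a$ and $b$ are consecutive modulo $k$, so that removing $b$ restores a block structure only partially disrupted by the presence of $a$; (b) $b$ is one of the two values operated on by $\tau_1$; or (c) the exceptional configuration in which $\tau_1$ swaps two positions with exactly one position between them, and $b$ is the value sitting at that in-between position. Each of (a)--(c) is precisely excluded by a hypothesis of the lemma, producing the desired contradiction.

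I expect the main obstacle to be the case analysis behind the claim that (a)--(c) exhaust the possibilities. The natural split is on whether $\tau_1$ acts within a single decreasing block of $L_1$ or across the two blocks, on the positional distance of its swap, and on where $a$ sits relative to that swap. The bookkeeping is made delicate by the standardization shifts incurred when different letters are removed: values above $a$ in $\rho$ shift down by one in $\rho_{-(a)}$, while values above $b$ shift down by one in $\rho_{-(b)}$, so carefully translating between the $L_1$-indexed description and the $L_2$-indexed description of $\rho$, and verifying that the hypotheses match the offending configurations exactly, is the finicky step of the argument.
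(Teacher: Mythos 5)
Your setup---assume for contradiction that $\rho_{-(b)}$ is also leader-adjacent via some transposition $\tau_2$, and derive a contradiction from the hypotheses on $b$---matches the paper's, but the proposal stops exactly where the real work has to begin. The assertion that two near-leader descriptions of the same $\rho$ can coexist only in your configurations (a)--(c) is not a bookkeeping detail: it \emph{is} the lemma, and you offer no argument for it beyond announcing a case split (within-block versus cross-block swap, positional distance, location of $a$) that you do not carry out and whose exhaustiveness cannot be checked. Nothing in the proposal identifies a structural rigidity property of leader permutations that forces the two descriptions to be compatible only in the excluded ways, so as written this is a plan rather than a proof.

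For comparison, the paper closes this gap by passing to the doubly-deleted permutation $\rho_{-(a,b)}$ and introducing \emph{semi-leader} permutations (leader permutations together with the decreasing permutation), a class closed under deleting one letter and standardizing. Two rigidity facts then carry the argument: (i) two semi-leaders of the same length that agree in even one position are identical, so the semi-leader one transposition away from $\rho_{-(a,b)}$ is unique; in the case where $\tau_2$ does not involve $a$ this forces $\tau_1=\tau_2$, and then the hypothesis $a\not\equiv b\pm 1 \pmod{k}$ shows $\rho$ would itself have leader structure, contradicting that $\rho$ is primary. (ii) In the case where $\tau_2$ does involve $a$, one obtains two semi-leaders of length $k-2$ related by a transposition induced by $\tau_1$ followed by moving a single letter; viewing semi-leaders as rotations of one another on a circle and counting destroyed versus surviving adjacencies shows this is impossible---and this is precisely where the hypothesis that $b$ is not the lone letter between the two letters swapped by $\tau_1$ is needed, to ensure the induced transposition is not an adjacent swap. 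Your outline would need either this kind of mechanism or a fully executed case analysis (including the standardization shifts you flag as ``finicky'') before it establishes that (a)--(c) are the only possible obstructions.
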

\begin{proof}
 Suppose, for the sake of contradiction, that $\tau_2\cdot \rho_{- (b)}$ is a leader permutation for some transposition $\tau_2$. Let $\rho_{-(a,b)}$ be the sub-standardization of $\rho$ that contains all letters of $\rho$ other than $a$ and $b$ (here we say that a transposition of a sub-standardization swaps the same physical letters as it did in the original permutation). 
 
 Define a \emph{semi-leader permutation} to be a permutation that is either a leader permutation or the decreasing permutation. Note that for any semi-leader permutation, if we remove one letter and take the standardization, the result is always a semi-leader permutation.
 
 We claim that $\tau_1 \cdot \rho_{-(a, b)}$ is a semi-leader permutation (note that $\tau_1$ does not involve $a$ or $b$). In particular, $\tau_1 \cdot \rho_{-(a)}$ is a leader permutation, and thus removing $b$ to get $\tau_1 \cdot \rho_{-(a, b)}$ gives a semi-leader permutation. Moreover, $\tau_1 \cdot \rho_{-(a, b)}$ is the unique semi-leader permutation adjacent to $\rho_{-(a, b)}$. In particular, any other such semi-leader permutation $x$ would differ from $\tau_1 \cdot \rho_{-(a, b)}$ in at most $4$ positions, and would therefore have to agree with $\tau_1 \cdot \rho_{-(a, b)}$ in at least one position. Whenever two semi-leader permutations of the same length agree in at least one position, they must be the same permutation, meaning that $x$ actually equals $\tau_1 \cdot \rho_{-(a, b)}$.

 If $\tau_2$ operates only on elements of $\rho_{-(a, b)}$ (i.e., $\tau_2$ does not involve $a$), we can conclude that $\tau_2=\tau_1$. In particular, $\tau_2 \cdot \rho_{-(a, b)}$ is a semi-leader permutation because $\tau_2 \cdot \rho_{-(b)}$ is a leader permutation. Thus $\tau_1 \cdot \rho_{-(a, b)} = \tau_2 \cdot \rho_{-(a, b)}$, so $\tau_1=\tau_2$.

 However, if $\tau_1=\tau_2$ then $\tau_1$ takes both $\rho_{-(b)}$ and $\rho_{-(a)}$ to leader permutations. We claim that this then implies that $\rho$ is also a leader permutation, a contradiction. This is because when we add in $b$ to $\rho_{-(b)}$ to get $\rho$, $b$ is not adjacent to $a$ (recall that $a \not\equiv b \pm \modd{1} {k}$), so the characters that $b$ is adjacent to in $\rho_{-(a)}$ remain adjacent to $b$ in $\rho$ (although their values may be shifted by $1$), ensuring that $\rho$ is a leader permutation. 
 
 It remains to consider the case where $\tau_2$ operates on $a$. In this case, we define $\tau_2 \cdot \rho_{-(a, b)}$ to be the standardization of $\tau_2 \cdot \rho_{-(b)}$ with $a$ removed. Now $\tau_2$ moves only one letter in $\rho_{-(a, b)}$ and results in a semi-leader permutation (as $\tau_2 \cdot \rho_{-(a, b)}$ is contained in $\tau_2 \cdot \rho_{-(b)}$, which is a semi-leader permutation). Thus there are two semi-leader permutations $x = \tau_1 \cdot \rho_{-(a, b)}$ and $y = \tau_2 \cdot \rho_{-(a, b)}$, each of length $k - 2$, such that we can get from $x$ to $y$ by swapping two letters (call this transposition $\tau\prime$) and then moving one letter. We place each letter of the permutation on a circle in clockwise order starting with the first letter. Observe that, up to rotation, all semi-leader permutations are equivalent when put on a circle in this way. Hence, on the circle $x$ and $y$ are equivalent up to rotation. Let $\tau\prime$ swap letters $j,k$. Note that, as $\tau\prime$ does not swap adjacent letters, neither $j$ nor $k$ are adjacent to any of the same letters as they were before $\tau\prime$. However, moving one letter can remove at most three adjacencies (2 containing the moved letter, 1 for the letters it moved between), so one of the four new adjacencies with $j$ or $k$ remains. Hence, after performing transposition $\tau\prime$ and moving one letter, the circle is not the same up to rotation, a contradiction.

\end{proof} 
\begin{lemma}\label{lem: 2 primary classes}
There are exactly two \textbf{primary classes} in $S_n$ for $n\geq 7$.
\end{lemma}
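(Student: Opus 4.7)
The plan is to establish that the primary permutations form exactly two equivalence classes via a matching lower and upper bound. For the lower bound, Lemma~\ref{lem: parity invariance} shows that parity is an invariant of every equivalence class, so exhibiting two specific primary permutations in $S_n$ of opposite parity immediately yields at least two distinct primary classes. The nontrivial direction is the upper bound, for which I would fix two representative permutations $R^{\text{even}}_n, R^{\text{odd}}_n \in S_n$ (one of each parity) and prove by strong induction on $n$ that every primary permutation in $S_n$ is $\{1234, 3412\}$-equivalent to the representative matching its parity.

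The inductive step, with base cases $n = 7, 8$ verified by computer, proceeds as outlined in the section introduction. Given a primary $\pi \in S_n$ with $n \geq 9$, the first step is to choose a letter $a$ such that the sub-standardization $\pi_{-(a)}$ is itself primary in $S_{n-1}$. Two obstructions could block the naive choice: $\pi_{-(a)}$ could be adjacent to a leader permutation, which I would resolve by replacing $a$ with a different letter supplied by Lemma~\ref{lem:creating-primary-permutations}; or $\pi_{-(a)}$ could begin with its largest letter or end with $1$, which I would handle as a separate Case~2 by direct inspection of the first or last letter of $\pi$. The inductive hypothesis then transforms $\pi_{-(a)}$ into the parity-matching $R_{n-1}$ via a sequence of $\{1234,3412\}$-pattern-replacements (parity-matching because of Lemma~\ref{lem: parity invariance}), and because none of these replacements involve $a$, they lift to pattern-replacements on $\pi$, producing an equivalent permutation $\pi^\ast$ that agrees with $R_n$ outside the position of $a$.

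To finish the induction, I would select a second letter $c \neq a$ in $\pi^\ast$ that already occupies its final $R_n$-position, and repeat the previous step: remove $c$, apply the inductive hypothesis to the (primary) sub-standardization $\pi^\ast_{-(c)}$ to transform it into the appropriate $R_{n-1}$, and again lift the replacements. Provided the representatives are chosen so that deleting $c$ from $R_n$ yields precisely the $R_{n-1}$ targeted in the second inductive call, the resulting permutation agrees with $R_n$ both after removing $a$ and after removing $c$, forcing it to equal $R_n$ itself, which completes the induction.

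The main obstacle is to choose the representatives $R^{\text{even}}_n, R^{\text{odd}}_n$ compatibly (so that removing appropriate letters from $R_n$ produces $R_{n-1}$ of the required parity) and to verify, for every primary $\pi$, that the second letter $c$ with the required ``correct-position'' property always exists in $\pi^\ast$ after the first inductive transformation. Combined with the careful bookkeeping needed to check the hypotheses of Lemma~\ref{lem:creating-primary-permutations} and to dispatch Case~2 directly, this existence and compatibility analysis is where essentially all of the technical work of the proof is concentrated.
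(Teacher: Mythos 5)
Your plan is the same strategy the paper uses (parity gives the lower bound; the upper bound is an induction that deletes one letter, repairs leader-adjacency with Lemma~\ref{lem:creating-primary-permutations}, applies the inductive hypothesis, and then deletes a second letter already sitting in its final position), but as written it has one concrete gap: the inductive hypothesis only applies to primary permutations lying in \emph{nontrivial} classes, since a primary class is by definition nontrivial, and a primary permutation can perfectly well avoid both $1234$ and $3412$ and sit in a singleton class (e.g., $3216547$ is primary but avoids both patterns). So ``choose $a$ so that $\pi_{-(a)}$ is primary'' is not enough to license the first inductive call: you must also guarantee that deleting $a$ does not destroy every occurrence of $1234$ or $3412$. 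The paper does this by fixing one occurrence $p$ in $\rho$ and requiring $a \notin p$, and for the second deletion by observing that after the first transformation the permutation minus its leading $1$ contains a long increasing subsequence (hence a $1234$, which also rules out leader-adjacency). Your list of obstructions (leader-adjacency; beginning with the maximum or ending with $1$) omits this third condition, and it must also be folded into the count of excluded letters when you choose the replacement letter via Lemma~\ref{lem:creating-primary-permutations}.

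Two smaller points. First, once all constraints on the replacement letter are imposed (the positional conditions analogous to the paper's (1)--(4), avoidance of the fixed pattern occurrence, of the letters moved by the transposition, of $a$ and $a\pm 1$, and of the letter between the transposed pair), up to ten letters are excluded, so the inductive step only goes through for $k\ge 11$; you then need computer-verified base cases through $n=10$, not just $n=7,8$ with an inductive step from $n\ge 9$. Second, ``$\pi^\ast$ agrees with $R_n$ outside the position of $a$'' is not literally what the first inductive call yields, since the presence of $a$ shifts positions and values; the clean finish is the paper's: take representatives beginning with $1$ (e.g., $\Pi_n = 12\cdots n$ and $\Psi_n = 12\cdots(n-2)\,n\,(n-1)$, which are deletion-compatible), note that after the first call the permutation begins with $1$, and take $c=1$ as the second deleted letter, so the second call turns the remaining letters into $R_{n-1}$ and hence the whole permutation into $R_n$. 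With these repairs your outline coincides with the paper's proof.
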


\begin{proof}
Let $n \ge 7$. Define $\Pi_n=123\cdots n$ and $\Psi_n =123\cdots (n-3)(n-2)n(n-1)$. First of all, note that the permutations $\Pi_n$ and $\Psi_n$ are both primary permutations. In particular, for $n\geq 7$, no transposition takes $\Pi$ or $\Psi$ to a leader permutation because for any transposition $\tau$, both $\tau \cdot \Pi$ and $\tau \cdot \Psi$ contain $123$ patterns. Further, the parity of the number of inversions is even in $\Pi$ and odd for $\Psi$, so they are in two distinct primary classes by Lemma~\ref{lem: parity invariance}. This proves that $2$ is a lower bound on the number of primary classes.

To proceed, we use induction to show $2$ is an upper bound on the number of primary classes.

The base cases of $n=7,8,9,10$ are verifiable by computer.

For the inductive step, we assume that Lemma~\ref{lem: 2 primary classes} holds for an $n=k-1$ with $k-1\geq 9$ and prove that Lemma~\ref{lem: 2 primary classes} holds for $n=k$. To do so, we show that any primary permutation in $S_k$ is equivalent to one of $\Pi_k$ and $\Psi_k$.  Let $\rho\in S_k$ be an arbitrary primary permutation; we will show that $\rho$ is equivalent to either $\Pi_k$ or $\Psi_k$ depending on the parity of the number of inversions $\rho$ contains.  Because primary classes are nontrivial, we can find some $1234$ or $3412$ pattern in $\rho$. Take an arbitrary such pattern $p$. As $k\geq 10$, we can now find some number $a$ inside of $\rho$ such that $a\neq 1$, $a \neq k$, $a$ is neither the first nor last number in $\rho$, and $p$ does not contain $a$. Now, we look at the sub-standardization $\rho\prime\in S_{k-1}$ of $\rho$ that excludes $a$. As $a$ was not in $p$, $\rho\prime$ still contains $p$ and thus is part of a nontrivial class. As $a$ was neither $n$ nor the first number in $\rho$, $\rho\prime$ does not start with $n$. Similarly, $\rho\prime$ does not end with $1$. We proceed with casework on whether $\rho\prime$ is adjacent to a leader permutation.

\textbf{Case 1:} $\mathbf{\rho\prime}$ \textbf{is a primary permutation.} In this case, by the inductive hypothesis, $\rho\prime$ is equivalent to either $\Pi_{k-1}$ or $\Psi_{k-1}$. Hence, in $\rho$, we can use the equivalence relation to re-order the elements excluding $a$ to form $\Pi_{k-1}$ or $\Psi_{k-1}$. Call this new permutation after the re-ordering $\rho_1$. Now, observe that because $a$ was not the first number in $\rho$ but both $\Pi_{k-1}$ and $\Psi_{k-1}$ begin with $1$, $1$ is the first number in $\rho_1$. Moreover, the second number in $\rho_1$ is either $2$ or $a$, neither of which equals $n$.

Consider the sub-standardization $\rho_1\prime$ created by excluding the $1$ in $\rho_1$. Note that $\rho_1\prime$ neither starts with $n$ nor ends with $1$. Additionally, we attain a $1234\cdots(k-4)$ pattern in $\rho_1\prime$ by ignoring the $a$ and the final two numbers of $\rho_1\prime$. Such a pattern clearly must contain a $1234$ pattern and can never occur in a permutation adjacent to a leader permutation as $k-4\geq5$, so $\rho_1\prime$ is primary. Since $\rho_1\prime$ is primary, by our inductive hypothesis it is equivalent to either $\Psi_{k-1}$ or $\Pi_{k-1}$. In either case rearranging the sub-permutation of $\rho_1$ that $\rho_1\prime$ corresponds to yields that $\rho_1$, and hence $\rho$, must be equivalent to either $\Psi_k$ or $\Pi_k$, as desired.

\textbf{Case 2:} $\mathbf{\rho\prime}$ \textbf{is not primary.} In this case, consider some letter $b$ in $\rho$ such that 
\begin{enumerate}
 \item if the second letter of $\rho$ is $k$, $b$ is not the first letter in $\rho$
 \item if the second to last letter of $\rho$ is $1$, $b$ is not the last letter in $\rho$ 
 \item if the last letter in $\rho$ is $2$, $b\neq 1$ 
 \item if the first letter in $\rho$ is $k-1$, $b\neq k$.
\end{enumerate}

Next we show that, as long as $k \ge 11$, it is possible to select $b$ satisfying conditions (1), (2), (3), and (4) as well as the conditions of \ref{lem:creating-primary-permutations} (which we will reiterate shortly). Let $\tau$ be the transposition taking $\rho'$ to a leader permutation.  Note that these conditions imply that the sub-standardization $\rho^\star\in S_{k-1}$ attained by removing $b$ from $\rho$ neither ends in $1$ nor starts with $k$. Further, note that if the if-conditions for $(2)$ and $(3)$ both take effect, this would force $\rho$ to end with $12$, and either the $1$ or the $2$ would be operated on by $\tau$ as no leader permutation ends with $12$. Similarly, if the if-conditions for $(1)$ and $(4)$ both take effect, $\rho$ would start with $(k-1)k$, and either $(k-1)$ or $k$ would be operated on by $\tau$. Also, it is easy to see that both letters operated by $\tau$ must be a part of any $1234$ or $3412$ pattern in $\rho'$, because the 3-letter subsequences of these patterns do not appear in leader permutations. Hence, we if we take $b$ satisfying (1), (2), (3), and (4) and we require $b$ to satisfy the conditions of Lemma~\ref{lem:creating-primary-permutations} (meaning that $b$ is not contained in the pattern $p$; $b$ is not operated on by $\tau$; $b \neq a$; $b\not\equiv a\pm \modd{1} {k}$; and if $\tau$ swaps two letters separated by only one letter, $b$ is not that letter) then in total we exclude at most $10$ elements from the possible choices (in particular, (1), (2), (3), and (4) exclude at most two elements not already excluded as part of $\tau$). This means that as long as $k\geq 11$, we can find such a $b$ in $\rho$.

Then, by Lemma~\ref{lem:creating-primary-permutations}, the permutation pattern formed by removing $b$ from $\rho$ must be a primary permutation, so this case reduces to Case $1$.

These two cases complete the induction, establishing that the number of primary classes is exactly 2.

\end{proof}
\section{Putting the proof together}
We now finish proving the theorem by putting the lemmas proved in the previous sections together. 
\begin{theorem*}[Theorem~\ref{thm:nontrivial} restated]
For $n \ge 7$, the number of nontrivial equivalence classes of $S_n$ under the $\{1234,3412\}$-equivalence is $\frac{n^3+6n^2-55n+54}{6}$.
\end{theorem*}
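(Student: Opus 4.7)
The plan is to combine the three main results established in the preceding sections. By Lemma~\ref{lem:reduce with recursion}, the theorem reduces to showing that $B_n = n+1$ for all $n \ge 7$, where $B_n$ counts the nontrivial equivalence classes of $S_n$ in which no permutation begins with $n$ or ends with $1$.

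I would partition these $B_n$ classes into two collections: the \emph{adjacent classes}, those containing at least one permutation adjacent to some leader permutation, and the \emph{primary classes}, those in which every permutation is primary. Proposition~\ref{prop:adjacentclasses} supplies exactly $n-1$ adjacent classes (one per leader permutation of length $n$, of which there are $n-1$ by definition), and Lemma~\ref{lem: 2 primary classes} supplies exactly $2$ primary classes, for a total of $n+1$.

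Disjointness of the two collections is immediate: by definition a primary permutation is not adjacent to any leader permutation, so no class can be simultaneously an adjacent class and a primary class. For exhaustiveness, I would fix a nontrivial class $C$ counted by $B_n$ and split on whether $C$ contains some permutation $\sigma$ adjacent to some leader permutation $\pi$. If it does, Lemma~\ref{lem:adjacent2} guarantees that ``adjacent to $\pi$'' is preserved under $\{1234,3412\}$-pattern-replacements, so every permutation of $C$ is adjacent to $\pi$; by Proposition~\ref{prop:adjacentclasses} the full set of permutations adjacent to $\pi$ is already a single equivalence class, so $C$ is one of the $n-1$ adjacent classes. Otherwise no permutation of $C$ is adjacent to any leader permutation, and since $C$ lies in $B_n$ no permutation of $C$ starts with $n$ or ends with $1$; every permutation of $C$ is therefore primary, so $C$ is a primary class.

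All the difficulty has already been absorbed into the previous sections; the hardest step is the stooge-sort-style induction of Lemma~\ref{lem: 2 primary classes}. At this stage the argument is only bookkeeping: $B_n = (n-1) + 2 = n+1$, which, substituted into the recurrence of Claim~\ref{clm:recurrence} together with the computer-verified base cases $n = 7, 8$, yields $A_n = \frac{n^3+6n^2-55n+54}{6}$ for all $n \ge 7$.
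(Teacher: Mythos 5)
Your proposal is correct and follows essentially the same route as the paper: reduce via Lemma~\ref{lem:reduce with recursion} to showing $B_n = n+1$, then count the $n-1$ leader-adjacent classes from Proposition~\ref{prop:adjacentclasses} and the $2$ primary classes from Lemma~\ref{lem: 2 primary classes}. Your extra care in checking disjointness and exhaustiveness of the two collections is a slightly more explicit version of the paper's one-line observation that every permutation counted by $B_n$ is either leader-adjacent or primary, but it is not a different argument.
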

\begin{proof}
First of all, by Lemma~\ref{lem:reduce with recursion}, we see that it is sufficient to show that there are exactly $n+1$ classes of permutations in $S_n$ not beginning with $n$ or ending with $1$. Every such permutation is in exactly one of two categories, the set of permutations adjacent to leader permutations and primary permutations. By Proposition~\ref{prop:adjacentclasses} there are $n-1$ classes of permutations adjacent to a leader permutation, and by Lemma~\ref{lem: 2 primary classes} there are exactly two primary classes. Thus, in total we find that permutations not beginning in $n$ or ending in $1$ fit into $n-1+2=n+1$ classes, as desired. By Lemma~\ref{lem:reduce with recursion}, this completes the proof.
\end{proof}

\section{Acknowledgments}
The authors would like to thank Canada/USA Mathcamp for providing the opportunity to conduct this research. The authors would also like to thank William Kuszmaul for his mentorship on the project and for suggesting the research problem.

\bibliographystyle{jis.bst}
\bibliography{Bibliography}

\begin{thebibliography}{10}

\bibitem{albert2006stanley}
M.~H. Albert, M.~Elder, A.~Rechnitzer, P.~Westcott, and M.~Zabrocki, On the
  {Stanley--Wilf} limit of 4231-avoiding permutations and a conjecture of
  {Arratia}, {\em Adv. in Appl. Math.} {\bf 36}(2) (2006), 96--105.

\bibitem{bona2007new}
M.~B{\'o}na, New records in {Stanley--Wilf} limits, {\em European J. Combin}
  {\bf 28}(1) (2007), 75--85.

\bibitem{bona2004limit}
M.~B\'ona, The limit of a {Stanley--Wilf} sequence is not always rational, and
  layered patterns beat monotone patterns, preprint, 2004.
\newblock Available at \url{https://arxiv.org/abs/math/0403502}.

\bibitem{claesson2001generalized}
A.~Claesson, Generalized pattern avoidance, {\em European J. Combin} {\bf
  22}(7) (2001), 961--971.

\bibitem{fazel2014equivalence}
V.~Fazel-Rezai, {Equivalence Classes of Permutations Modulo Replacements
  Between 123 and Two-Integer Patterns}, {\em Electron. J. Combin.} {\bf 21}(2)
  (2014), P2--47.

\bibitem{conf}
{The International Conference on Permutation Patterns}, 2020.

\bibitem{kitaev2011patterns}
S.~Kitaev, {\em Patterns in permutations and words}, Springer Science \&
  Business Media, 2011.

\bibitem{Kn}
D.~Knuth, Permutations, matrices, and generalized {Young} tableaux, {\em
  Pacific J. Math.} {\bf 34}(3) (1970), 709--727.

\bibitem{kuszmaul2013counting}
W.~Kuszmaul, {Counting Permutations Modulo Pattern-Replacement Equivalences for
  Three-Letter Patterns}, {\em Electron. J. Combin.} {\bf 20}(4) (2013), 10.

\bibitem{kuszmaulzhou20}
W.~Kuszmaul and Z.~Zhou, New results on families of pattern-replacement
  equivalences, {\em Discrete Math.} {\bf 343}(7) (2020), 111878.

\bibitem{lewis2011pattern}
J.~B. Lewis, {Pattern avoidance for alternating permutations and Young
  tableaux}, {\em J. Combin. Theory Ser. A} {\bf 118}(4) (2011), 1436--1450.

\bibitem{LPRW}
S.~Linton, J.~Propp, T.~Roby, and J.~West, Equivalence classes of permutations
  under various relations generated by constrained transpositions, {\em J.
  Integer Seq.} {\bf 15}(2) (2012), 3.

\bibitem{ma}
M.~Ma, {New Results on Pattern-Replacement Equivalences: Generalizing a
  Classical Theorem and Revising a Recent Conjecture}, 2017.
\newblock Available at
  \url{https://math.mit.edu/research/highschool/primes/materials/2017/Ma.pdf}.

\bibitem{marcus2004excluded}
A.~Marcus and G.~Tardos, Excluded permutation matrices and the {Stanley--Wilf}
  conjecture, {\em J. Combin. Theory Ser. A} {\bf 107}(1) (2004), 153--160.

\bibitem{NS}
J.-C. Novelli and A.~Schilling, The forgotten monoid, {\em RIMS K\^oky\^uroku
  Bessatsu} {\bf 8} (2008), 71--83.

\bibitem{PRW}
A.~Pierrot, D.~Rossin, and J.~West, Adjacent transformations in permutations,
  In {\em DMTCS Proceedings Vol. AO, 23rd International Conference on Formal
  Power Series and Algebraic Combinatorics (FPSAC 2011)}, pp.  765--776, 2011.

\bibitem{SS}
R.~Simion and F.~W. Schmidt, Restricted permutations, {\em European J. Combin}
  {\bf 6}(4) (1985), 383--406.

\bibitem{sloane2003online}
N.~J.~A. Sloane et~al., {The On-line Encyclopedia of Integer Sequences}, 2020.

\bibitem{S12}
R.~P. Stanley, An equivalence relation on the symmetric group and
  multiplicity-free flag h-vectors, {\em J. Comb.} {\bf 3}(3) (2012), 277--298.

\end{thebibliography}

\bigskip
\hrule
\bigskip

\noindent 2020 {\it Mathematics Subject Classification}:
Primary 05A05; Secondary 05A15.

\noindent \emph{Keywords:} permutation, permutation pattern, equivalence class, integer sequence, pattern-replacement, transposition.
\bigskip
\hrule
\bigskip
\noindent (Concerned with sequence \seqnum{A330395}.) 
\end{document}